\def\A{\mathcal A}
\def\B{\mathcal B}
\def\e{\varepsilon}
\newtheorem{Th}{Theorem}
\newtheorem{Cor}{Corollary}
\newtheorem{lemma}{Lemma}
\newtheorem{defn}{Definition}
\newtheorem{prop}{Proposition}
\newcommand{\Z}{\ensuremath{\mathbb Z}}
\newcommand{\N}{\ensuremath{\mathbb N}}
\newcommand{\R}{\ensuremath{\mathbb R}}
\newcommand{\U}{\ensuremath{\mathcal U}}
\begin{document}
\date{\today}
\title{Dense sets of integers with prescribed representation functions}
\subjclass[2000]{11B13, 11B34, 11B05,11A07,11A41.}
\keywords{Perfect difference sets, Sidon sets, $B_h[g]$ sequences,
representation functions.}

\author{Javier Cilleruelo}
\thanks{The work of J.C. was supported by Grant MTM 2005-04730 of MYCIT (Spain)}
\address{Departamento de Matem\'aticas\\
Universidad Aut\'onoma  de Madrid\\
28049 Madrid, Spain }
\email{franciscojavier.cilleruelo@uam.es}

\author{Melvyn B. Nathanson}
\thanks{The work of M.B.N. was supported
in part by grants from the NSA Mathematical Sciences Program and
the PSC-CUNY Research Award Program}
\address{Department of Mathematics\\
Lehman College (CUNY)\\
Bronx, New York 10468}
\email{melvyn.nathanson@lehman.cuny.edu}

\begin{abstract}
Let $\A$ be a set of integers and let $h\ge 2$.   For every integer $n$, let $r_{\A, h}(n)$ denote the number of   representations of $n$ in the form $n=a_1+\cdots +a_h,$ where $a_i\in    \A$ for $1\le i \le h$, and $a_1\le \cdots \le a_h$. The function $r_{\A,h}:\Z \to \mathbf N$,
where $\mathbf N=\N \cup \{0,\infty\}$, is  the {\it representation function of order $h$  for $\A$}.

We prove that every function $f:\Z \rightarrow \mathbf N$
satisfying $\liminf_{|n|\to \infty}f(n)\ge g$  is the
representation function of order $h$ for a sequence $\A=\{ a_k\}$
of integers, and that $\A$ can be constructed so that it increases
``almost" as slowly as any given $B_h[g]$ sequence. In particular,
for every $\varepsilon >0$ and $g\ge g(h,\varepsilon),$ we can
construct a sequence $\A$  satisfying $r_{\A,h}=f$ and $\A(x)\gg
x^{(1/h)-\varepsilon}$.
\end{abstract}

     \maketitle

 \section{Introduction}

Let $\A$ be a set of integers and let $h\ge 2$.
For every integer $n$, let $r_{\A, h}(n)$ denote the number of
     representations of $n$ in the form $$n=a_1+\cdots +a_h$$
     where
     $$
     a_1\le \cdots \le a_h \quad\text{and}\quad
     a_i\in
     \A \quad\text{for}\quad 1\le i \le h.$$ The function $r_{\A,h}:\Z \to \mathbf N$ is the {\it representation function of order $h$
     for $\A$}, where $\mathbf N=\N \cup \{0,\infty\}$.

Nathanson  proved \cite{Nath}  that  any function $f:\Z \to
\mathbf N$ satisfying $ \liminf_{|n|\to \infty}f(n)\ge 1 $ is the
{\em representation function of order} $h$ of a set of integers
$\A$
 such that\footnote{The notation $f(x)\gg g(x)$
 means that there exists a
constant $C>0$ such that $f(x)\ge Cg(x)$ for $x$ large enough}
\begin{equation}\label{A}
\A(x)\gg x^{1/(2h-1)},\end{equation}
where $\A(x)$
counts the number of elements  $a \in \A$ with $|a| \leq x$.  It is an open problem to determine how dense the sets $\A$
can be.

\smallskip

In this paper we study the connection between this problem and the
problem of finding dense $B_h[g]$ sequences.   We recall that a set
$\B $ of nonnegative integers  is called a $B_h[g]$ sequence if
$$r_{\B,h}(n)\le g$$ for every nonnegative integer $n$.  It is usual
to write $B_h$ to denote  $B_h[1]$ sequences.

\smallskip

Luczak and Schoen proved that any $B_h$ sequence satisfying an
additional kind of Sidon property (see \cite{L-S} for the
definition of this property, which they call the $S_h$ property) can
be enlarged to obtain a sequence with any prescribed
representation function given $f$ satisfying that $\liminf_{|x|\to
\infty}f(x)\ge 1$.  In particular, since they prove that there
exists a $B_h$ sequence $\A$ satisfying the $S_h$ property with
$\A(x)\gg x^{1/(2h-1)}$, they recover Nathanson's result.

\smallskip

In this paper we prove that any $B_h[g]$ sequence can be modified
slightly to have any prescribed representation function $f$
satisfying $\liminf_{|x|\to \infty}f(x)\ge g$. Our main
theorem is the following.

\begin{Th}\label{thm}
Let $f:\Z\rightarrow \mathbf N $ any function such that
$\liminf_{|n|\to \infty}f(n)\ge g$  and let $\B$ be any $B_h[g]$
sequence. Then, for any decreasing function $\epsilon(x)\to 0$ as
$x\to \infty $, there exists a sequence $\A$ of integers such that
\[
r_{\A,h}(n)=f(n)\quad   \text{ for all $  n\in \Z$}
\]
and
\[
\A(x)\gg \B(x\epsilon(x)).
\]
\end{Th}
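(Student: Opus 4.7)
The approach is an inductive greedy construction. Enumerate the integers $\Z = \{n_1, n_2, n_3, \ldots\}$ in nondecreasing order of absolute value, and write $\B = \{b_1 < b_2 < \cdots\}$. We build $\A$ as the union of an ascending chain of finite sets $\A_0 \subset \A_1 \subset \A_2 \subset \cdots$. At each stage $k$ we perform two moves. A \emph{density move} adjoins a rescaled and translated block
\[
\mathcal C_k \;=\; \{\, m_k b_j + s_k : N_{k-1} < j \le N_k \,\}
\]
of $\B$, with parameters $N_k, m_k, s_k$ to be chosen very large. A \emph{correction move} adjoins a small set $\mathcal E_k$ of auxiliary elements so that $\A_k = \A_{k-1} \cup \mathcal C_k \cup \mathcal E_k$ satisfies $r_{\A_k,h}(n_k) = f(n_k)$. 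The parameters at stage $k$ are chosen so that every new element is much larger in absolute value than anything previously placed, and so that consecutive new elements are separated by much more than $h|n_k|$; this spacing guarantees that no $h$-sum mixing old and new elements, nor one drawn from two far-apart new elements, can reach any target $n_j$ with $j \le k$. Consequently, once stage $k$ is complete, the values $r_{\A,h}(n_j)$ for $j \le k$ are frozen at $f(n_j)$.

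Three points must be verified for the representation-counting claim. First, the affine map $b \mapsto m_k b + s_k$ preserves the $B_h[g]$ property, so $r_{\mathcal C_k, h}(n) \le g$ for every $n$; since $f(n) \ge g$ except for finitely many $n$, for $|n_k|$ large there is room to add the nonnegative deficit $f(n_k) - r_{\A_{k-1} \cup \mathcal C_k, h}(n_k)$ using $\mathcal E_k$. Second, each missing representation is supplied by a single dedicated $h$-tuple: take $h-1$ copies of a gigantic $M$ (used only here) together with the element $n_k - (h-1)M$; this tuple sums to $n_k$ but, by the largeness of $M$, cannot contribute to any other target or interact with any $\mathcal C_\ell$, $\mathcal E_\ell$ for $\ell\neq k$. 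Third, the finitely many exceptional $n$ with $f(n) < g$ are absorbed into $\A_0$, built at the outset as a finite set tailored to match $f$ on an initial window, and every subsequent $\mathcal C_k$ is then shifted to lie strictly beyond that window; negative targets are handled symmetrically by allowing some of the $\mathcal E_k$-elements to be negative.

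The density bound follows from the density blocks alone. Choosing $s_k$ of the same order of magnitude as $m_k b_{N_k}$, call this common order $x_k$, we have $\A(x) \ge N_k = \B(b_{N_k}) \ge \B\bigl(x/(2m_k)\bigr)$ for $x \in [x_k, x_{k+1})$. It therefore suffices to enforce $2m_k \le 1/\epsilon(x_{k+1})$, i.e.\ to grow $m_k$ no faster than $1/\epsilon$; since $\epsilon$ is decreasing to $0$ but never vanishes, this is compatible with any \emph{a priori} growth schedule demanded by the spacing conditions.

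The main obstacle is the simultaneous management of all these parameters: the density wants $m_k$ small, the spacing wants $m_k$ and $s_k$ large relative to earlier data, and the correction tuples $\mathcal E_k$ must avoid interference with both past and future density blocks. Organizing a consistent inductive choice — at each stage only finitely many constraints on finitely much data, chosen in the order $N_k$, $m_k$, $s_k$, $M_k$ — is the technical heart of the argument, but is a bookkeeping matter once the basic moves above are in place.
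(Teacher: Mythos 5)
Your proposal has the right high-level shape --- the paper also builds $\A$ by adjoining a very sparse correcting sequence to a dense set derived from $\B$ --- but the mechanism that makes this work is missing, and the claims offered in its place fail at concrete points. The key unaddressed obligation is that $r_{\A,h}(n)\le f(n)$ must hold for \emph{every} integer $n$, including the very large $n$ that are themselves $h$-sums of your new elements; for infinitely many such $n$ one may have $f(n)=g$ exactly, so not a single unwanted representation can be tolerated beyond the $g$ allowed. Your spacing conditions only protect the finitely many targets $n_j$ with $j\le k$; they do nothing for these large targets. Two failure modes are concrete. First, the union of separately rescaled blocks $\mathcal C_k=\{m_kb+s_k\}$ is not a $B_h[g]$ set even though each block is: for a mixed pattern using $p_1\ge 1$ elements of one block and $p_2=h-p_1\ge 1$ of another, a single large $n$ can be represented in up to $r_{\B,p_1}(\sigma_1)\cdot r_{\B,p_2}(\sigma_2)\le g^2$ ways (and more if the scales $m_j$ are close enough that the pair $(\sigma_1,\sigma_2)$ is not determined by $n$), which exceeds $g=f(n)$ whenever $g\ge 2$. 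Second, your correction element $n_k-(h-1)M_k$ is a huge \emph{negative} integer, and its sums with $h-1$ large positive elements drawn from blocks and correction tuples placed at \emph{later} stages land on arbitrary integers; choosing $M_k$ ``gigantic'' cannot prevent this, because the interfering data does not yet exist when $M_k$ is chosen, while the later blocks cannot simply flee to infinity because your density accounting requires $x_{k+1}\epsilon(x_{k+1})\lesssim x_k/m_k$. Reconciling all of this is the actual content of the theorem, not a bookkeeping afterthought.

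The paper resolves both difficulties with one structural device absent from your plan: the Inserting Zeros Transformation $T_\gamma^r$. It maps all of $\B$ to a \emph{single} set that is still $B_h[g]$ (Proposition~\ref{B[g]}), so no cross-block over-counting can arise, and it simultaneously forces every element of the dense part --- and every previously inserted correction element --- to have residue at most $hm_k2^{-2r}$ modulo $m_k=2^{2rk+\gamma(k)}$ (Proposition~\ref{p1} together with \eqref{z1}). The correction pair $u_{2k-1}=-m_k2^{-r}$, $u_{2k}=(h-1)m_k2^{-r}+z_k$ then contributes multiples of $m_k2^{-r}$, which modulo $m_k$ is far from anything any other combination of elements can produce; this is exactly what Lemma~\ref{dis} extracts, and it guarantees that inserting $\U_k$ adds precisely the one intended representation $(h-1)u_{2k-1}+u_{2k}=z_k$ while Lemmas~\ref{n0} and~\ref{For} police the remaining patterns. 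The density cost is then a uniform stretching factor $2^{2r\gamma^{-1}(\log_2 x)}$, tuned to $\epsilon$ at the end. If you wish to salvage the block decomposition you would need a substitute for this modular separation; largeness and spacing alone do not supply one.
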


It is difficult problem to construct dense $B_h$ sequences. A trivial
counting argument gives $$\B(x)\ll x^{1/h}$$ for these sequences.
On the other hand, the greedy algorithm shows that there exists a
$B_h$ sequence $\B$ such that \begin{equation}\label{Bx}\B(x)\gg
x^{1/(2h-1)}.\end{equation}

\smallskip

For  $B_2$ sequences, also called Sidon sets, Ruzsa proved
\cite{Ruzsa} that there exists a Sidon set $\B $ such that
\begin{equation}\label{Ruzsa} \B(x)\gg x^{\sqrt
2-1+o(1)}\end{equation}

This result and Theorem~\ref{thm} give the following corollary.

\begin{Cor}
Let $f:\Z\rightarrow \mathbf N$ any function such that
$\liminf_{|n|\to \infty}f(n)\ge 1$. Then there exists a sequence
of integers $\A$ such that
\[ r_{\A,2}(n)=f(n)\quad \text{ for all $n\in
\Z$}
\]
and
\[
\A(x)\gg x^{\sqrt 2-1+o(1)}.
\]
\end{Cor}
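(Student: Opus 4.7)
The corollary is essentially a direct specialization of Theorem~\ref{thm} combined with Ruzsa's construction, so my plan has very little to do beyond plugging the right objects into the theorem. First I would invoke Ruzsa's theorem to fix a Sidon set $\B$ (that is, a $B_2[1]$ sequence) satisfying $\B(x)\gg x^{\sqrt{2}-1+o(1)}$, as recorded in~(\ref{Ruzsa}). Since $f$ satisfies $\liminf_{|n|\to\infty} f(n)\ge 1$, the hypotheses of Theorem~\ref{thm} hold with $h=2$ and $g=1$.

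The only real choice is the rate $\e(x)$. I would pick any decreasing $\e(x)\to 0$ that tends to zero subpolynomially, for example $\e(x)=1/\log(x+2)$. With this choice, Theorem~\ref{thm} produces a set $\A\subseteq\Z$ with $r_{\A,2}(n)=f(n)$ for every $n\in\Z$ and
\[
\A(x)\gg \B\bigl(x\e(x)\bigr).
\]

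It then remains to translate the density lower bound. Unwinding the $x^{\sqrt{2}-1+o(1)}$ notation, for every $\delta>0$ Ruzsa's bound yields $\B(y)\gg y^{\sqrt{2}-1-\delta}$ for all sufficiently large $y$. Substituting $y=x\e(x)$,
\[
\A(x)\gg \bigl(x\e(x)\bigr)^{\sqrt{2}-1-\delta}=x^{\sqrt{2}-1-\delta}\,\e(x)^{\sqrt{2}-1-\delta}.
\]
Because $\log(1/\e(x))=O(\log\log x)=o(\log x)$, the factor $\e(x)^{\sqrt{2}-1-\delta}$ is $x^{-o(1)}$, and since $\delta>0$ was arbitrary this absorbs into the $o(1)$ in the exponent, giving $\A(x)\gg x^{\sqrt{2}-1+o(1)}$ as claimed.

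There is no serious obstacle here; the proof is a one-line deduction once Theorem~\ref{thm} and Ruzsa's construction are available. The only point requiring any care is verifying that a subpolynomial $\e(x)$ does not degrade the exponent, which is exactly why Theorem~\ref{thm} was formulated with the flexibility of an arbitrary decreasing $\e$ rather than, say, a fixed power of $x$.
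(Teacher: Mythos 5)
Your proof is correct and follows exactly the route the paper intends: the corollary is stated there as an immediate consequence of Theorem~\ref{thm} applied with $h=2$, $g=1$ to Ruzsa's Sidon set, and your choice of a subpolynomially decaying $\e(x)$ together with the check that it does not degrade the exponent is the only detail the paper leaves implicit.
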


This result gives an affirmative answer to the third open problem
in \cite{Chen}, which was also posed previously in  \cite{Nath2}.
Unfortunately, nothing better than (\ref{Bx}) is known for $B_h$
sequences for $h\ge 3$.   Erd\H os and Renyi \cite{Erdos} proved
that, for any $\epsilon>0$, there exists a positive integer $g$
and a $B_2[g]$ sequence $\B$ such that $ \B(x)\gg
x^{(1/2)-\epsilon}.$  They claimed that the same method could be
extended  to $B_{h}[g]$ sequences, but a serious problem with
non-independent events appears when $h\ge
 3$.

 \smallskip

Vu~\cite{Vu}, using  ideas from Erd\H os and Tetali \cite{Er-Te} to study
a related problem for bases of order $h$, proved that for any $\epsilon>0$, there exist an integer $g$ and a $B_h[g]$
sequence  $\B$ such that
\[
\B(x)\gg
x^{1/h-\epsilon}.
\]
This result and Theorem \ref{thm} imply the next corollary

\begin{Cor}
Given $h\ge 2$, for any $\varepsilon >0$, there exists
$g=g(h,\varepsilon)$ such that, for any function $f:\Z\rightarrow
\mathbf N $ satisfying $\liminf_{|n|\to \infty}f(n)\ge g$,
there exists a sequence $\A$ of integers such that
\[
r_{\A,h}(n)=f(n)\quad \text{ for all $n\in \Z$}
\]
and
\[
\A(x)\gg x^{\frac 1h-\varepsilon}.
\]
\end{Cor}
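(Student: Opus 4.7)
The plan is to deduce the corollary directly from Theorem~\ref{thm} applied to the dense $B_h[g]$ sequences produced by Vu's theorem. Given $h\ge 2$ and $\varepsilon>0$, I would first introduce two auxiliary parameters $\varepsilon',\delta>0$, chosen small enough that
\[
(1-\delta)\!\left(\tfrac{1}{h}-\varepsilon'\right)\;\ge\;\tfrac{1}{h}-\varepsilon.
\]
A convenient concrete choice is $\varepsilon'=\varepsilon/2$ and $\delta=\varepsilon/2$ (suitably reduced if $\varepsilon$ is close to $1/h$). Vu's theorem then supplies a positive integer $g=g(h,\varepsilon')$ and a $B_h[g]$ sequence $\B$ of nonnegative integers satisfying $\B(x)\gg x^{1/h-\varepsilon'}$. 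The value $g(h,\varepsilon):=g(h,\varepsilon')$ is the one that will appear in the statement of the corollary.

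Next, suppose $f:\Z\to\mathbf{N}$ satisfies $\liminf_{|n|\to\infty} f(n)\ge g$. I would apply Theorem~\ref{thm} with this $f$, the $B_h[g]$ sequence $\B$ just obtained, and the decreasing function $\epsilon(x)=x^{-\delta}$, which clearly tends to $0$. The theorem then produces a set of integers $\A$ with $r_{\A,h}(n)=f(n)$ for every $n\in\Z$, together with the density bound
\[
\A(x)\;\gg\;\B\bigl(x\,\epsilon(x)\bigr)\;=\;\B\bigl(x^{1-\delta}\bigr).
\]

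Combining this with Vu's lower bound for $\B$ gives
\[
\A(x)\;\gg\;\bigl(x^{1-\delta}\bigr)^{1/h-\varepsilon'}
\;=\;x^{(1-\delta)(1/h-\varepsilon')}
\;\ge\;x^{1/h-\varepsilon},
\]
by the choice of $\varepsilon'$ and $\delta$, which completes the argument.

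There is essentially no analytic obstacle; the corollary is a direct combination of the two ingredients. The only point that requires care is the parameter balancing: one must pick $\varepsilon'$ strictly smaller than $\varepsilon$ so that the loss of a factor of $x^{\delta/h}$ incurred by passing from $x$ to $x^{1-\delta}=x\,\epsilon(x)$ in Theorem~\ref{thm} can be absorbed. This is why $g$ must be allowed to depend on both $h$ and $\varepsilon$, since invoking Vu's theorem with the tighter exponent $1/h-\varepsilon'$ forces a larger value of $g$.
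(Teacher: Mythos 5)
Your proposal is correct and follows exactly the route the paper intends: the paper simply asserts that Vu's theorem together with Theorem~\ref{thm} imply the corollary, and your argument supplies the (correct) parameter balancing, taking $\epsilon(x)=x^{-\delta}$ and invoking Vu's result with a slightly smaller exponent $1/h-\varepsilon'$ so that the loss from $x$ to $x^{1-\delta}$ is absorbed. Nothing further is needed.
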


The construction in \cite{Nath} for the set $\A$ satisfying the
growth condition~(\ref{A}) was based on the greedy algorithm. In
this paper we construct the set $\A$ by adjoining a very sparse
sequence $\U=\{ u_k\}$ to a suitable $B_h[g]$ sequence $\B$.  This
idea was used in \cite{Ci-Nath}, but  in a simpler way, to
construct dense {\em perfect difference sets}, which are sets such
that every  nonzero integer has a unique representation  as a
difference of two elements of $\A$.  The proof of the main theorem
in \cite{Ci-Nath} can be adapted easily to our problem in the
simplest case $h=2$.

\begin{Th}\label{thm2}
Let $f:\Z\rightarrow \mathbf N$ be a function such that
$\liminf_{|n|\to \infty}f(n)\ge g$,  and let $\B$ be a $B_2[g]$
sequence. Then there exists a sequence of integers $\A$ such that
\[
r_{\A,2}(n)=f(n)\quad \text{ for all $n\in \Z$}
\]
and
\[
\A(x)\gg \B(x/3).
\]
\end{Th}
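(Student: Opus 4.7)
The plan is to mimic the construction of \cite{Ci-Nath} for perfect difference sets: start with the dilation $\A_0=3\B$, which is still $B_2[g]$ and satisfies $\A_0(x)=\B(x/3)$, and enlarge it inductively by adjoining a very sparse sequence of pairs $\{(u_k^{(i)},v_k^{(i)})\}$ chosen so that the prescribed representation function $f$ is realized. The factor $1/3$ in the density bound is precisely the cost of this dilation.

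\textbf{Setup.} Since $r_{\B,2}(n)\le g$ for $n\ge 0$, $r_{\B,2}(n)=0$ for $n<0$, and $f(n)\ge g$ off a bounded set, only finitely many $n$ satisfy $r_{\A_0,2}(n)>f(n)$; deleting the finitely many elements of $\B$ responsible removes this obstruction and changes $\B(x)$ only by $O(1)$. Enumerate $\Z=\{n_1,n_2,\ldots\}$ in order of non-decreasing absolute value and construct a chain $\A_0\subset\A_1\subset\A_2\subset\cdots$ maintaining the invariants (i)~$r_{\A_k,2}(n_j)=f(n_j)$ for $j\le k$; (ii)~$r_{\A_k,2}(n_j)\le f(n_j)$ for every $j$; and (iii)~$\A_k\setminus\A_0$ is finite.

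\textbf{Inductive step.} Let $d_k=f(n_k)-r_{\A_{k-1},2}(n_k)\ge 0$ by (ii). For $i=1,\ldots,d_k$, adjoin a pair $(u_i,v_i)$ with $u_i+v_i=n_k$, where $u_i$ is chosen enormous (so $v_i=n_k-u_i$ is extremely negative). Each pair supplies one new representation of $n_k$, raising the count at $n_k$ by exactly $d_k$ to reach $f(n_k)$. Each $u_i$ must avoid a finite list of ``bad'' values so that no collateral sum $u_i+a$, $v_i+a$, $u_i+u_{i'}$, $v_i+v_{i'}$, or $u_i+v_{i'}$ (with $a\in\A_{k-1}$) lands on a processed target $n_j$ with $j<k$. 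Because $\A_{k-1}\setminus\A_0$ is finite and $\A_0=3\B$ has density $o(1)$, the forbidden set of $u_i$ remains sparse, so arbitrarily large admissible $u_i$ exist. This preserves invariants (i) for $j\le k$ and (iii).

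\textbf{Main obstacle.} The central difficulty is maintaining (ii) at the \emph{unprocessed} targets $n_j$ with $j>k$, since the collateral equation $v_i+a=n_j$ with $a\in 3\B$ cannot be uniformly avoided. This is where the $B_2[g]$ hypothesis is indispensable: the base $\A_0$ contributes at most $g$ to any $r_{\cdot,2}(n_j)$, and $f(n_j)\ge g$ once $|n_j|$ is large, so every target carries a strictly positive ``budget'' $f(n_j)-r_{\A_0,2}(n_j)$ of slack available to absorb collateral hits. By additionally forbidding at each stage those equations that would exhaust the remaining budget of any single $n_j$, and by spacing the $u_k^{(i)}$ super-exponentially, one shows that each $n_j$ accrues only $O(g)$ collateral representations throughout the entire construction, safely within budget. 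Taking $\A=\bigcup_k\A_k$, invariant (i) gives $r_{\A,2}(n)=f(n)$ for every $n\in\Z$, and $\A\supseteq\A_0$ yields $\A(x)\ge\B(x/3)-O(1)\gg\B(x/3)$.
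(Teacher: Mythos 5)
Your overall strategy --- dilate $\B$, adjoin a very sparse sequence of pairs $(u,v)$ with $u+v=n_k$, and control the collateral sums --- is exactly the approach the paper intends for this theorem (it omits the proof, deferring to the analogous argument of \cite{Ci-Nath}, and carries out the general-$h$ version in full in Section 3). But the one genuinely hard point, your ``main obstacle'' paragraph, does not hold up. The budget $f(m)-r_{\A_0,2}(m)$ is \emph{not} strictly positive: for $|m|\ge n_0$ you only know $f(m)\ge g$ and $r_{\A_0,2}(m)\le g$, so the slack can be exactly $0$ (take $f\equiv g$ and $m$ a sum attaining the maximal multiplicity $g$ in $3\B$). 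For such $m$ a \emph{single} collateral hit $v_i+a=m$ already violates $r_{\A,2}(m)\le f(m)$; and since $v_i+\A_0$ is an infinite translate of $\A_0$ while the zero-budget integers can form a positive proportion of $\A_0+\A_0$, there is no reason a choice of $u_i$ avoiding all such hits exists. Nor would your unproven assertion that each $n_j$ accrues ``only $O(g)$ collateral representations'' suffice even if true: the requirement is the exact inequality (collateral count at $n_j$) $\le f(n_j)-r_{\A_0,2}(n_j)$, whose right-hand side may be $0$. What is missing is a separation mechanism guaranteeing that every collateral sum lands on an integer having \emph{no} representation as a sum of two base elements, so that the full budget $g\le f$ is available there and is not exceeded. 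This is precisely what the paper's machinery provides: the disjointness of the sets $\A_k^{(s,t)}$ proved via the moduli $m_k$ and the norms $\| \cdot \|_{m_k}$ (Lemmas \ref{dis} and \ref{For}), and, in the $h=2$ case of \cite{Ci-Nath}, the residue classes modulo $3$ that the dilation $3\B$ makes available --- which is the real reason the bound is $\B(x/3)$. Your proposal uses the factor $3$ only for density bookkeeping and never exploits it arithmetically.

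A second, smaller gap: $f$ takes values in $\mathbf N=\N\cup\{0,\infty\}$, so your plan to reach $r_{\A_k,2}(n_k)=f(n_k)$ at the finite stage $k$ by adjoining $d_k=f(n_k)-r_{\A_{k-1},2}(n_k)$ pairs breaks down when $f(n_k)=\infty$. The paper avoids this by running through a sequence $z_k$ that visits every integer infinitely often and adding at most one new representation per visit; your once-per-target enumeration would need to be replaced by such a scheme.
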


We omit the proof  because it is very close to the proof of the
main theorem in \cite{Ci-Nath}. Unfortunately, that proof cannot
be adapted to the case $h\ge 3$. We need another definition  of a
``suitable" $B_h$ set.  In section \S 2 we shall show how to
modify a $B_h[g]$ sequence $\B$ so that it becomes ``suitable." We
do this by applying the ``Inserting Zeros Transformation" to an
arbitrary $B_h[g]$ set. This is the main ingredient in the proof
of Theorem~ \ref{thm}.

\smallskip

Chen \cite{Chen} has proved that for any $\epsilon >0$ there
exists a unique representation basis $\A$ (that is, a set $\A$
with $r_{\A,2}(k)=1$ for all integers $k\ne 0$) such that
$\limsup_{x\to \infty} \A(x)/x^{1/2-\epsilon}>1$. J. Lee
\cite{Lee} has improved this result by proving that for any
increasing function $\omega$ tending to infinity there exists a
unique representation basis $\A$ such that $\limsup_{x\to
\infty}\A(x)\omega(x)/\sqrt x>0$.

\smallskip

Theorem \ref{thm2} and  the classical constructions of Erd\H
os~\cite{stoh55} and Kruck\" eberg~\cite{kruc61}  of infinite
Sidon sets $\B$ such that $\limsup_{x\to \infty}\B(x)/\sqrt x>0$
provide a unique representation basis $\A$ such that
$\limsup_{x\to \infty}\A(x)/\sqrt x>0$.  Indeed, we can easily
adapt the proof of Theorem 1.3 in \cite{Ci-Nath} to the case of
the additive representation function $r(n)$ (instead of the
subtractive representation function $d(n)=\# \{ n=a-a',\ a,a'\in
\A\}$).
\begin{Th}
There exists a unique representation basis $\A$ such
that
$$\limsup_{x\to \infty}\frac{\A(x)}{\sqrt x}\ge \frac 1{\sqrt 2}.$$
\end{Th}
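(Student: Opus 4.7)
The plan is to adapt the construction used in the proof of Theorem~1.3 of \cite{Ci-Nath} (which built a dense perfect difference set) from the subtractive to the additive representation function, feeding it with a Sidon set of near-optimal density.

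First, invoke the classical constructions of Erd\H os \cite{stoh55} and Kruck\"eberg \cite{kruc61} to fix an infinite Sidon set $\B\subset\N$ satisfying $\limsup_{x\to\infty}\B(x)/\sqrt x \ge 1/\sqrt 2$. Being a $B_2[1]$ set, $\B$ already satisfies $r_{\B,2}(n)\le 1$ for every $n\in\Z$.

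Next, I would build $\A=\B\cup\U$ with $\U=\{u_k\}_{k\ge 1}\subset\Z$ by induction. Enumerate the non-zero integers $n$ that are \emph{not} represented by $\B+\B$ as a sequence $(n_k)$, interleaving positive and negative values so that every missing non-zero integer eventually appears. Assume inductively that $\A_{k-1}=\B\cup\{u_1,\ldots,u_{k-1}\}$ satisfies $r_{\A_{k-1},2}(n_j)=1$ for $j<k$ and $r_{\A_{k-1},2}(n)\le 1$ for every $n\ne 0$. Pick $b_k\in\B$, set $u_k=n_k-b_k$, and choose $b_k$ so as to avoid the following coincidences: (a) $u_k\in\A_{k-1}$; (b) $u_k+a=a'+a''$ for some $a\in\A_{k-1}\setminus\{b_k\}$ and $a',a''\in\A_{k-1}$; (c) $2u_k$ equals a sum $a'+a''$ or $u_k+a$ with $a,a',a''\in\A_{k-1}$; (d) $2u_k=n_k$. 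Each item forbids $b_k$ from lying in a \emph{fixed finite} subset of $\B$ determined by $\A_{k-1}$ and $n_k$, so the infinitude of $\B$ guarantees a valid choice. (Note that $u_k\notin\B$ is automatic from $r_{\B,2}(n_k)=0$.) Setting $\A=\bigcup_k\A_k$ gives a unique representation basis.

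Since $\A\supset\B$, we have $\A(x)\ge\B(x)$ for every $x>0$, and therefore
\[
\limsup_{x\to\infty}\frac{\A(x)}{\sqrt x}\ge \limsup_{x\to\infty}\frac{\B(x)}{\sqrt x}\ge \frac{1}{\sqrt 2}.
\]
The main obstacle is the inductive bookkeeping: one must verify that conditions (a)--(d) exclude only finitely many elements of $\B$ at each step, and that no non-zero integer ever acquires a second representation as the induction proceeds (in particular, that the $n_j$ with $j<k$ retain exactly one representation after $u_k$ is adjoined). This verification is the direct additive analogue of the one carried out in \cite{Ci-Nath} for the subtractive representation function $d(n)=\#\{(a,a')\in\A^2:a-a'=n\}$; the only substantive changes are replacing differences by sums and interleaving the enumeration over positive and negative missing integers so that the construction covers all of $\Z\setminus\{0\}$.
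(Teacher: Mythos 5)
The central claim of your induction --- that each of conditions (a)--(d) forbids $b_k$ from lying in a \emph{finite} subset of $\B$ --- is false, and this is exactly where the difficulty of the problem lives. Condition (b) forbids every $b_k$ of the form $b_k=n_k+a-a'-a''$ with $a\in\A_{k-1}\setminus\{b_k\}$ and $a',a''\in\A_{k-1}$; since $\A_{k-1}\supseteq\B$ is infinite, this is an infinite union of excluded values and there is no reason it should miss even one element of $\B$. Indeed the opposite is to be expected: adjoining a single $u_k$ requires the translate $u_k+\B$ to meet $\B+\B$ in only the one intended point $n_k$, but for a Sidon set with $\B(x)\gg\sqrt x$ along a subsequence the sumset $\B+\B$ has positive upper density along that subsequence, so an averaging argument shows that $(u+\B)\cap(\B+\B)\cap[1,x]$ has size of order $\sqrt{x}$ for typical $u$. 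Nothing in your argument rules out that \emph{every} candidate $u_k$ creates infinitely many double representations. This is precisely why Theorem 2 of the paper yields only $\A(x)\gg\B(x/3)$ rather than $\A(x)\ge\B(x)$: the construction there does not keep $\B$ intact but replaces it by a dilate, so that the correction elements and their sums with the dilated set fall in residue classes disjoint from the old sumset, and the collision conditions are handled by congruences rather than by finite avoidance.

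For the present theorem even that congruence trick is unaffordable, since dilating by $3$ would degrade the constant from $1/\sqrt2$ to $1/\sqrt6$. The proof the authors intend (the additive adaptation of Theorem 1.3 of \cite{Ci-Nath}) does not fix the infinite Sidon set in advance at all: it builds $\A$ in widely separated blocks, placing a large finite Sidon set (a Singer/Kr\"uckeberg block, which is where the constant $1/\sqrt 2$ enters) at each of a sparse sequence of scales and inserting the sparse correction elements in the enormous gaps between blocks, so that separation of scales --- not finiteness of a forbidden set --- prevents new coincidences; the density is then read off at the right endpoints of the blocks. Your reduction ``$\A\supseteq\B$, hence $\A(x)\ge\B(x)$'' is therefore not available, and as written the proposal has a genuine gap at the inductive step: the strategy of completing the Sidon set first and then patching it cannot be repaired without restructuring $\B$ itself.
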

Again we omit the proof because it is very close to the proof of
Theorem 1.3 in \cite{Ci-Nath}.

\smallskip

Theorem above answers affirmatively the first open problem in
\cite{Chen}. Note also that if $\A$ is an infinite Sidon set of
integers, then the set $$\A'=\{ 4a: a\ge 0\}\cup \{-4a+1 : a<0\}$$
is also a Sidon set and, in this case, $\liminf|\A\cap
(-x,x)|/\sqrt x=\liminf \A'(4x)/\sqrt x$. A well known result of
Erd\H os states that $\liminf \B(x)/\sqrt x=0$ for any Sidon set
$\B$. Then the above limit is zero, so it answers negatively the
second open problem in \cite{Chen}.

\smallskip

 We do not  know if it is possible to obtain a similar result for $h \geq 3,$
because it is open problem to determine if there exists an infinite $B_h$
sequence $\B$ with $\limsup_{x\to \infty }\B(x)/x^{1/h}>0$. It is
easy, however, to prove that for any function $\omega$ tending to
infinity there exists a unique representation basis of order $h$
such that $\limsup_{x\to \infty}\B(x)\omega(x)/x^{1/h}>1$. We can
construct the set $\B $ as follows: Let $x_1,\dots , x_k,\dots $ be a
sequence of positive integers such that
$\omega(x_k)>(hx_{k-1})^{1/h}$ and consider, for each $k$,  a
$B_h$ sequence $\B_k\subset [1,x_k/(hx_{k-1})]$ with $|\B_k|\gg
(x_k/(hx_{k-1}))^{1/h}$.  The set
$\B=\cup_k (hx_{k-1})\ast\B_k$ satisfies the conditions, where we
use the notation $t\ast \A=\{ ta,\ a\in \A\}$.

The construction above and Theorem \ref{thm} yield the following
Corollary, which extends Theorem 6 in \cite{Lee} in several ways.

\begin{Cor}
Let $f:\Z\rightarrow \mathbf N $ any function such that
$\liminf_{|n|\to \infty}f(n)\ge 1$.  For any increasing function
$\omega$ tending to infinity there exists a set  $\A$ such that
$r_{\A,h}(n)=f(n)$ for all integers $n$, and
$$
\limsup_{x\to \infty}\A(x)\omega(x)/x^{1/h}>0.
$$
\end{Cor}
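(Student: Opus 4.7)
The plan is to apply Theorem~\ref{thm} to the explicit $B_h$ sequence $\B$ constructed in the paragraph preceding the corollary---this is legitimate since $\B$ is a $B_h = B_h[1]$ sequence and $\liminf_{|n|\to\infty} f(n) \ge 1 = g$---with the parameters $x_k$ pushed apart more aggressively and with $\epsilon(x)$ chosen in terms of $\omega$. The main issue to overcome is that Theorem~\ref{thm} only supplies $\A(x) \gg \B(x\epsilon(x))$, so the density of $\B$ at scale $x_k$ only bounds the density of $\A$ at the larger scale $y_k$ satisfying $y_k\epsilon(y_k)=x_k$, losing a factor of order $\epsilon(y_k)^{1/h}$ in the final ratio $\A(y_k)/y_k^{1/h}$.

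First I would assume without loss of generality that $\omega(x)\le\log x$: the conclusion $\limsup \A(x)\omega(x)/x^{1/h}>0$ is stronger for larger $\omega$, so it suffices to prove the corollary with $\omega$ replaced by $\min(\omega,\log x)$, which is still increasing and tends to infinity. Then I would set $\epsilon(x) := \omega(x)^{-h/2}$, which is decreasing to $0$ and, by the above bound on $\omega$, satisfies $x\epsilon(x) \ge x/(\log x)^{h/2}\to\infty$. Next I would choose the sequence $x_1<x_2<\cdots$ inductively so as to satisfy the strengthened condition $\omega(x_k)\ge(hx_{k-1})^{2/h}$ (possible because $\omega\to\infty$), and form $\B=\bigcup_k(hx_{k-1})\ast\B_k$ as in the paragraph above the corollary; this $\B$ is $B_h$ and satisfies $\B(x_k)\gg(x_k/(hx_{k-1}))^{1/h}$. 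Theorem~\ref{thm} applied to this $\B$ with this $\epsilon$ then produces a set $\A$ with $r_{\A,h}=f$ and $\A(x)\gg\B(x\epsilon(x))$.

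To finish, I would examine $\A$ along the subsequence $y_k$ defined by $y_k\epsilon(y_k) = x_k$ (which exists for large $k$ since $y\mapsto y/\omega(y)^{h/2}$ is eventually increasing and unbounded under the WLOG). Then $\A(y_k) \gg \B(x_k) \gg (x_k/(hx_{k-1}))^{1/h}$, and a direct manipulation using $x_k/y_k = \epsilon(y_k) = \omega(y_k)^{-h/2}$ gives
\[
\frac{\A(y_k)\,\omega(y_k)}{y_k^{1/h}}\gg \frac{\omega(y_k)\,\epsilon(y_k)^{1/h}}{(hx_{k-1})^{1/h}} = \frac{\omega(y_k)^{1/2}}{(hx_{k-1})^{1/h}} \ge 1,
\]
by the choice of $x_k$ and the monotonicity $\omega(y_k)\ge\omega(x_k)$. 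The main obstacle, and the motivation for the WLOG reduction, is balancing three competing constraints on $\epsilon$: it must tend to $0$ (to be admissible in Theorem~\ref{thm}), yet decay slowly enough that both $y\epsilon(y)\to\infty$ (so the subsequence $y_k$ is defined) and $\omega\,\epsilon^{1/h}\to\infty$ (so the final ratio survives); trimming $\omega$ down to $\log x$ makes these constraints simultaneously satisfiable with the simple choice $\epsilon = \omega^{-h/2}$.
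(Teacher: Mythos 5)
Your proposal is correct and follows exactly the route the paper intends (the paper's entire proof is the remark that ``the construction above and Theorem 1 yield the following Corollary''): you combine the block construction of $\B$ with Theorem~1 and correctly absorb the $\epsilon(y_k)^{1/h}$ loss by strengthening the gap condition to $\omega(x_k)\ge (hx_{k-1})^{2/h}$, which is the bookkeeping the paper omits. The only rough edges are cosmetic: the reduction to $\min(\omega,\log x)$ works because the conclusion is \emph{weaker} for larger $\omega$ (not stronger, as you wrote), and the existence of $y_k$ with $y_k\epsilon(y_k)=x_k$ is better justified by first replacing $\omega$ with a continuous increasing minorant (e.g.\ $\int_{x-1}^{x}\omega(t)\,dt$) and invoking the intermediate value theorem, since $y/\omega(y)^{h/2}$ need not be eventually increasing when $\omega$ has jumps.
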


\section{The Inserting Zeros Transformation}

Consider the binary expansion of the elements of a set $\B$ of
positive integers. We will modify these integers by inserting
strings of zeros at fixed places. We will see that this
transformation of the set $\B$ preserves certain additive properties.

\smallskip

In this paper we denote by $\gamma $ any strictly increasing
function $\gamma :\N_0 \to \N_0$ with $\gamma (0)=0$.   For every
positive integer $r$, we define the ``Inserting Zeros
Transformation'' $T_{\gamma }^r$ by
\begin{equation}\label{se}
T_{\gamma }^r\left(\sum_{i\ge 0}\e_i2^i \right)=\sum_{k\ge
0}2^{2rk}\sum_{i=\gamma (k)}^{\gamma (k+1)-1}\e_i 2^i.
\end{equation}
In other words, if the integer $b$ has the binary expansion
\[
b=\e_0\cdots \e_{\gamma (1)-1}\e_{\gamma (1)}\cdots \e_{\gamma
(2)-1}\e_{\gamma (2)}\cdots \e_{\gamma (k)-1}\e_{\gamma
(k)}\cdots,
\]
then
\[
T_{\gamma }^r(b) =\e_0\cdots \e_{\gamma (1)-1} \underbrace{0\cdots
0}_{2r} \e_{\gamma (1)} \cdots \e_{\gamma
(2)-1}\underbrace{0\cdots 0}_{2r} \e_{\gamma (2)}\cdots \e_{\gamma
(k)-1}\underbrace{0\cdots 0}_{2r}\e_{\gamma (k)}\cdots
\]
Note that if $b < b',$ then $T_{\gamma }^r(b) < T_{\gamma
}^r(b').$ We define the set
\begin{equation}\label{B}
T_{\gamma }^r(\B)=\{ T_{\gamma }^r(b): b\in \B\}.
\end{equation}

The next proposition proves that the function $T_{\gamma }^r$
preserves some Sidon properties.

\begin{prop}\label{B[g]}
Let $2r> \log_2h$.   If $b_1,\ldots,b_h,b'_1,\ldots, b'_h$ are positive integers such that
\[
T_{\gamma }^r(b_1)+\cdots +T_{\gamma }^r(b_h)=T_{\gamma
}^r(b'_1)+\cdots + T_{\gamma }^r(b'_h),
\]
then
\[
b_1+\cdots +b_h=b'_1+\cdots +b'_h.
\]
In particular, if $\B$ is a $B_h[g]$ set and $2r\ge \log_2h$, then
$T_{\gamma }^r(\B)$ is also a $B_h[g]$ set.
\end{prop}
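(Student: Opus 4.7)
My plan is to decompose everything block-by-block. For a positive integer $b$ with binary digits $\e_i$, set
$$B_k(b) = \sum_{i=\gamma(k)}^{\gamma(k+1)-1} \e_i\, 2^i,$$
so that $b = \sum_{k\ge 0} B_k(b)$ and, directly from (\ref{se}), $T_\gamma^r(b) = \sum_{k\ge 0} 2^{2rk} B_k(b)$. Defining $C_k = \sum_{j=1}^h B_k(b_j)$ and $C_k' = \sum_{j=1}^h B_k(b_j')$, the hypothesis of the proposition becomes
$$\sum_{k\ge 0} 2^{2rk} C_k = \sum_{k\ge 0} 2^{2rk} C_k'.$$

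The central step is to show that the integers $2^{2rk} C_k$, for varying $k$, occupy pairwise disjoint ranges of binary positions, so that each $C_k$ is determined uniquely by the common sum. Each $B_k(b_j)$ is supported on bit positions in $[\gamma(k), \gamma(k+1))$, so $C_k < h \cdot 2^{\gamma(k+1)} \le 2^{\gamma(k+1) + \lceil \log_2 h \rceil}$, and therefore $2^{2rk} C_k$ has binary support contained in $[\,2rk + \gamma(k),\, 2rk + \gamma(k+1) + \lceil \log_2 h \rceil\,)$. The next block $2^{2r(k+1)} C_{k+1}$ is supported on positions $\ge 2r(k+1) + \gamma(k+1)$. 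Non-overlap therefore holds precisely when $\lceil \log_2 h \rceil \le 2r$, which is exactly what the hypothesis $2r > \log_2 h$ delivers, since $2r$ is an integer. Reading off the binary expansion of the common sum block-by-block yields $C_k = C_k'$ for every $k$, and summing over $k$ gives
$$b_1 + \cdots + b_h = \sum_{k} C_k = \sum_{k} C_k' = b_1' + \cdots + b_h',$$
which is the main conclusion.

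For the $B_h[g]$ statement, I would use that $T_\gamma^r$ is strictly increasing, hence injective and order-preserving. Given $n$ in the $h$-fold sumset of $T_\gamma^r(\B)$, every ordered representation $n = T_\gamma^r(b_1) + \cdots + T_\gamma^r(b_h)$ with $b_j \in \B$ and $b_1 \le \cdots \le b_h$ pulls back uniquely to the ordered tuple $(b_1,\dots,b_h)$ in $\B^h$. By the first part, all such representations yield the same value $m = b_1 + \cdots + b_h$, so they inject into the ordered representations of $m$ as a sum of $h$ elements of $\B$; there are at most $g$ of these because $\B$ is $B_h[g]$.

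The only real obstacle is the accounting in the non-overlap argument: one must quantify exactly how much room the carries from adding $h$ block-values can consume, and check that the $2r$ inserted zeros always absorb them. This is where the hypothesis on $r$ is used in its sharpest form, and everything else in the proof is bookkeeping once this separation of blocks is established.
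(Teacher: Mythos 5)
Your proof is correct and is essentially the paper's argument: your block values $C_k$ are exactly the paper's $t_k$, and your disjoint-binary-support observation is the same carry-absorption fact that the paper expresses by reducing modulo $m_k=2^{2rk+\gamma(k)}$ and bounding the partial sums $\sum_{j=0}^{k-1}2^{2rj}t_j<m_k$. The deduction of the $B_h[g]$ property via injectivity and monotonicity of $T_{\gamma}^r$ matches what the paper leaves implicit.
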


\begin{proof}
We write
\begin{equation}\label{tk}
t_k=\sum_{i=\gamma (k)}^{\gamma (k+1)-1}\e_i(b_1)2^i+\cdots
+\sum_{i=\gamma (k)}^{\gamma (k+1)-1}\e_i(b_h)2^i.
\end{equation}

For any $k\ge 1$ we define the integer
\begin{equation}\label{mk}
m_k=2^{2rk + \gamma (k)}.
\end{equation}

It follows from~(\ref{se}), (\ref{tk}) and  (\ref{mk}) that
\[
T_{\gamma }^r(b_1)+\cdots +T_{\gamma }^r(b_h)\equiv
\sum_{j=0}^{k-1}2^{2rj}t_j \pmod{m_{k}}.
\]
Since $T_{\gamma }^r(b_1)+\cdots +T_{\gamma }^r(b_h)=T_{\gamma
}^r(b'_1)+\cdots + T_{\gamma }^r(b'_h),$ we have
\[
\sum_{j=0}^{k-1}2^{2rj}t_j\equiv
\sum_{j=0}^{k-1}2^{2rj}t'_j\pmod{m_{k}}.
\]

Notice that
\[
0\le \sum_{j=0}^{k-1}2^{2rj}t_j\le
2^{2r(k-1)}\sum_{j=0}^{k-1}t_j\le 2^{2r(k-1)}h\sum_{i=0}^{\gamma
(k)-1}2^i<2^{2r(k-1)}2^{2r}2^{\gamma (k)}=m_{k},
\]
and the same inequality works for $\sum_{j=0}^{k-1}2^{rj}t'_j$.
Then
\[
\sum_{j=0}^{k-1}2^{2rj}t_j=\sum_{j=0}^{k-1}2^{2rj}t'_j.
\]
It follows that $t_k=t'_k$
for all $k \geq 0$, and so
\[
b_1+\cdots +b_h=\sum_{k\ge 0}t_k=\sum_{k\ge 0}t'_k=b'_1+\cdots
+b'_h.
\]
This completes the proof.
\end{proof}

\begin{defn}
For all integers $m\ge 2$ and $x,$ let
\[
 \| x \|_m=\min \{ |y|,\ x \equiv y \pmod m\}.
\]
\end{defn}

Note that $\| x_1+ x_2\|_m \leq \|x_1\|_m + \|x_2\|_m$ for all integers $x_1$ and $x_2.$  Also, if $\|x\|_m\ne \|x'\|_m$ for some $m$, then $x\not \equiv
x'\pmod m$ and so $x\ne x'$.

\begin{prop}\label{p1}
For $k \geq 1$ and  for any positive integer $b$
\[
\|T_{\gamma }^r({b}) \|_{m_k} <  \frac{m_k}{2^{2r}},
\]
where $m_k$ is defined in (\ref{mk}).
\end{prop}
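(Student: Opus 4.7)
The plan is to prove the stronger statement that the least nonnegative residue of $T_\gamma^r(b)$ modulo $m_k$ is already smaller than $m_k/2^{2r}$; the bound $\|T_\gamma^r(b)\|_{m_k} < m_k/2^{2r}$ will then be immediate. Starting from the definition
\[
T_\gamma^r(b) = \sum_{j\ge 0} 2^{2rj}\sum_{i=\gamma(j)}^{\gamma(j+1)-1}\e_i\, 2^i,
\]
I would split the outer sum at $j=k$. For $j\ge k$, every term inside occupies a bit position $2rj+i$ with $i\ge \gamma(j)\ge \gamma(k)$, hence $2rj+i\ge 2rk+\gamma(k)$; all such contributions are divisible by $m_k=2^{2rk+\gamma(k)}$ and may be dropped modulo $m_k$. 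This reduces the computation to
\[
T_\gamma^r(b) \equiv \sum_{j=0}^{k-1} 2^{2rj}\sum_{i=\gamma(j)}^{\gamma(j+1)-1}\e_i\, 2^i \pmod{m_k}.
\]

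Next I would bound the right-hand side. Every bit position used by the surviving sum has the form $2rj+i$ with $j\le k-1$ and $i\le \gamma(j+1)-1\le \gamma(k)-1$, and a quick check shows this is maximized at $j=k-1$, $i=\gamma(k)-1$, giving maximum position $2r(k-1)+\gamma(k)-1$. Consequently the surviving sum lies in $[0,\,2^{2r(k-1)+\gamma(k)})=[0,\,m_k/2^{2r})$. Since $r\ge 1$ yields $m_k/2^{2r}\le m_k/4 < m_k/2$, this least nonnegative residue is automatically the minimum-absolute-value residue, and the conclusion follows.

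The argument is essentially bookkeeping, and I do not see a real obstacle: the $2r$ zero-pads that $T_\gamma^r$ inserts between consecutive binary blocks are precisely what create the gap of size $2r$ between the highest surviving bit position ($2r(k-1)+\gamma(k)-1$) and the modulus exponent ($2rk+\gamma(k)$), and that gap is exactly the factor $2^{2r}$ in the claim. The only care needed is to verify that reducing mod $m_k$ annihilates every block with index $\ge k$, which is why $m_k$ was defined in~\eqref{mk} with the exponent $2rk+\gamma(k)$ in the first place.
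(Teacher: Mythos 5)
Your proposal is correct and follows essentially the same route as the paper: reduce $T_\gamma^r(b)$ modulo $m_k$ by discarding the blocks with index $j\ge k$ (whose bit positions are at least $2rk+\gamma(k)$), then bound the surviving sum by $\sum_{l=0}^{2r(k-1)+\gamma(k)-1}2^l < m_k/2^{2r}$. The closing remark about the least nonnegative residue being the minimum-absolute-value residue is not even needed, since $\|x\|_{m_k}$ is by definition at most any nonnegative representative, but it does no harm.
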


\begin{proof}
Let $b=\e_0 \e_1 \e_2\ldots$ be the binary expansion of $b.$ Then
\[
T_{\gamma }^r({b})\equiv \sum_{j=0}^{k-1}2^{2rj}\sum_{i=\gamma
(j)}^{\gamma (j+1)-1}\e_i 2^i \pmod{m_k}
\]
and
\[
0  \leq \sum_{j=0}^{k-1}2^{2rj}\sum_{i=\gamma (j)}^{\gamma
(j+1)-1}\e_i2^i \le \sum_{l=0}^{2r(k-1)+ \gamma (k)-1}2^l<
\frac{m_k}{2^{2r}}.
\]
This completes the proof.
\end{proof}

\section{Proof of Theorem \ref{thm}}

\subsection{Two auxiliary sequences} Consider the sequence $\{ z_j\}_{j=1}^{\infty}$
defined by
\begin{equation}\label{z}
z_j=j-[\sqrt j]([\sqrt j]+1).
\end{equation}
For every positive integer $j$ there is a unique positive integer $s$ such that $s^2 \leq j < (s+1)^2.$   Then $j=s^2+s+i$ for some $i\in [-s,s]$ and $z_j=i.$  It follows that for every integer $i$ there are infinitely many positive integers $j$ such that $z_j=i.$
Moreover,  $|z_j| \leq s \leq \sqrt{j}$ for all $j \geq 1.$

\smallskip

Let $f:\Z\rightarrow \mathbf N $ any function such that
$\liminf_{|n|\to \infty}f(n)\ge g$.   Let $n_0$ be the least positive integer such that $f(n)\ge g$ for all $|n|\ge n_0$.
Choose an integer $ r>1+\log_2(h^2+n_0)$.   Then
\begin{equation}\label{r}
h^2<2^{r-1}\qquad \text{ and }\qquad
n_0<2^{r-1}.
\end{equation}
Let $\gamma :\N_0 \rightarrow \N_0$ be a strictly increasing
function such that $ \gamma (0)=0$.

\smallskip

Consider the sequence $\U=\{u_i\}_{i=1}^{\infty}$ defined by
\begin{equation}\label{u}
\begin{cases}u_{2k-1}&=-m_k2^{-r},\\u_{2k}&=(h-1)m_k2^{-r}+z_k\end{cases}
\end{equation}
where $m_k=2^{2rk+\gamma (k)}$. We write
\begin{equation}  \label{U}
\U_k=\{u_{2k-1},u_{2k}\}\qquad  \text{ and
}\qquad \U_{<k}=\bigcup_{s<k}\U_{s}.
\end{equation}
Note that for all $j\le k$ we have
\begin{equation}\label{z1}
|z_j|\le \sqrt{k} < 2^k \leq 2^{\gamma (k)} < 2^{2r(k-1)+\gamma
(k)}= m_k2^{-2r}.
\end{equation}

\subsection{The recursive construction}

For any $B_h[g]$-sequence $\B$ we consider the set $T_{\gamma
}^r(\B)$ defined in \eqref{B}. Let $f:\Z \rightarrow \N$ be a
function such that $f(n) \geq g$ for $|n| \geq n_0.$ We construct
an increasing sequence $\{ \A_k \}_{k=0}^{\infty}$ of sets of
integers as follows:
\begin{equation}\label{a0}
\A_0=\{a\in T_{\gamma }^r(\B):  a \geq n_0\}
\end{equation}
 and, for $k\ge 1$,
\[
\A_k=\begin{cases} \A_{k-1}\cup \U_k &\text{ if
}r_{\A_{k-1},h}(z_k)<f(z_k)\\
\A_{k-1} &\text{ otherwise}
\end{cases}\]
where $z_k$ and $ \U_k$ are defined in \eqref{z} and \eqref{U}.

\smallskip

We shall prove that the set
\[
\A=\bigcup_{k=0}^{\infty}\A_k
\]
satisfies $r_{\A,h}(n)=f(n)$ for all integers $n$.

\begin{lemma}\label{dis}
Let $k \geq 1.$
For nonnegative integers $s$ and $t$ with $s+t \leq h,$  let
\[
\A_k^{(s,t)}=(h-s-t)\A_{k-1}+su_{2k-1}+tu_{2k}.
\]
The sets $\A_k^{(s,t)}$ are pairwise disjoint, except possibly the
sets $\A^{(0,0)}$ and $\A^{(h-1,1)}$.
\end{lemma}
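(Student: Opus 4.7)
I would work modulo the integer $m_k$ from~\eqref{mk}, exploiting the explicit form of $u_{2k-1},u_{2k}$ in~\eqref{u}. Setting $\beta:=m_k/2^r$, we have $u_{2k-1}=-\beta$ and $u_{2k}=(h-1)\beta+z_k$, so the shift attached to each $(s,t)$ decomposes as
\[
su_{2k-1}+tu_{2k}\,=\,C(s,t)\beta+tz_k,\qquad C(s,t):=t(h-1)-s.
\]
A short case analysis on the simplex $\{(s,t):s,t\ge 0,\ s+t\le h\}$, using that $(t-t')(h-1)=s-s'$ together with $|s-s'|\le h$ and the feasibility constraints $s+t,s'+t'\le h$ force $|t-t'|\le 1$, shows that $C$ is injective there \emph{except} for the single collision $C(0,0)=C(h-1,1)=0$. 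This is precisely the exceptional pair allowed by the lemma.

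Suppose now $x\in\A_k^{(s,t)}\cap\A_k^{(s',t')}$ with $(s,t)\ne(s',t')$ and not equal to this exceptional pair. Writing $x$ in both ways and subtracting yields
\[
\sum_{i=1}^{h-s-t}a_i\;-\;\sum_{i=1}^{h-s'-t'}a'_i\;=\;-C\beta+(t'-t)z_k,
\]
where $C:=C(s,t)-C(s',t')$ is a nonzero integer with $|C|\le h^2$. I would then compare $\|\cdot\|_{m_k}$ of the two sides. Since $h^2<2^{r-1}$ by~\eqref{r}, $|C|\beta<m_k/2$, so $\|C\beta\|_{m_k}=|C|\beta\ge\beta$, and a short estimate using $\sqrt{k}<2^{\gamma(k)}$ together with $m_k=2^{2rk+\gamma(k)}$ gives $|(t'-t)z_k|\le h\sqrt{k}\ll\beta$. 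Hence $\|\text{RHS}\|_{m_k}\ge\beta(1-o(1))$.

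For the left-hand side I would verify, case by case, that $\|a\|_{m_k}<m_k/2^{2r}$ for every $a\in\A_{k-1}=\A_0\cup\U_{<k}$: this follows from Proposition~\ref{p1} when $a\in T_\gamma^r(\B)$, and from a direct size estimate using $m_{k-1}/m_k\le 2^{-2r-1}$ (and $|z_j|<2^{\gamma(k)}$) when $a\in\U_{<k}$. Summing at most $2h$ such terms and invoking $h<2^{(r-1)/2}$ then yields $\|\text{LHS}\|_{m_k}<2h\cdot m_k/2^{2r}<\beta/2^{(r-1)/2}$, which for $r\ge 3$ (automatic from~\eqref{r}) is at most $\beta/2$, contradicting $\text{LHS}=\text{RHS}$. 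The main technical hurdle is the bookkeeping in these $\|\cdot\|_{m_k}$ estimates; the two uses of $h^2<2^{r-1}$ (once to keep $|C|\beta$ in the range where $\|C\beta\|_{m_k}=|C|\beta$, and once to make the $2h$-fold sum of errors small compared to $\beta$) are both tight and explain the choice of $r$ in~\eqref{r}.
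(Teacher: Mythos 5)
Your proposal is correct and follows essentially the same route as the paper: reduce modulo $m_k$, use that the coefficient $t(h-1)-s$ separates the classes except for the collision between $(0,0)$ and $(h-1,1)$, bound the contributions of elements of $\A_0$ via Proposition~\ref{p1} and of elements of $\U_{<k}$ by their absolute size, and derive a contradiction with $h^2<2^{r-1}$. The only material you add is the explicit check that $C(s,t)=t(h-1)-s$ is injective on the simplex away from the exceptional pair, a fact the paper's proof asserts without spelling out.
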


\begin{proof}
If $n\in \A_k^{(s,t)}$ then
\begin{align*}
n&= a_1+\cdots +a_{h-s-t}+su_{2k-1}+tu_{2k}\\
&=a_1+\cdots +a_{h-s-t}+(t(h-1)-s)m_k2^{-r}+tz_k.
\end{align*}
If $a_i\in \A_0$, then  $\|a_i \|_{m_k}\le m_k2^{-2r}$ by
Proposition~\ref{p1}.
 If $a_i\in \U_{<k}$ then we use \eqref{u} and \eqref{z1} to obtain
 \begin{align*}
  \|a_i \|_{m_k}  \le   |a_i|    \le
(h-1)m_{k-1}2^{-r}+m_{k-1}2^{-2r}< h m_k 2^{-2r}.
\end{align*}
Therefore,
\begin{align*}
\| a_1+\cdots +a_{h-s-t}+tz_k \|_{m_k}  &\le
\|a_1 \|_{m_k}+\cdots
+ \|a_{h-s-t} \|_{m_k}+\|tz_k \|_{m_k}\\
&\le (h-s-t)m_kh2^{-2r}+tm_k2^{-2r} \\
&  \le h^2 m_k  2^{-2r}.
\end{align*}

Now suppose that $n\in \A_k^{(s',t')}$ for some $(s',t')\ne
(s,t)$. If $\{(s,t),(s',t')\} \neq \{(0,0),(h-1,1)\}$, then
 $$t(h-1)-s\ne t'(h-1)-s'$$
and
\begin{align*}
m_k2^{-r}&\le
\|\left ((t(h-1)-s)-(t'(h-1)-s')\right )m_k2^{-r}\|_{m_k}\\
&=
\|(t(h-1)-s)m_k2^{-r}-(t'(h-1)-s')m_k2^{-r}\|_{m_k}\\
&= \|\left
(n-(t(h-1)-s)m_k2^{-r}\right )-\left
(n-(t'(h-1)-s')m_k2^{-r}\right )\|_{m_k} \\
&\le \label{a3} \|
a_1+\cdots +a_{h-s-t}+tz_k\|_{m_k}+\| a_1'+\cdots
+a_{h-s'-t'}'+t'z_k\|_{m_k}\\
&\le 2 h^2 m_k 2^{-2r}.
\end{align*}
It follows that $h^2\ge 2^{r-1}$, which contradicts~(\ref{r}).  This completes the proof.
\end{proof}

\begin{lemma}\label{n0}
If $n\in \A_k^{(s,t)}$ for some $k\ge 1$ and $(s,t)\notin \{
(0,0), (h-1,1)\}$, then $|n|>n_0$.
\end{lemma}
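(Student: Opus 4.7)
The plan is to show that $|n| > n_0$ by obtaining a lower bound on $\|n\|_{m_k}$, which will force $|n|$ itself to exceed $n_0$ once $k \ge 1$.

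First I would rewrite an element $n \in \A_k^{(s,t)}$ using the definitions \eqref{u} as
\[
n = a_1+\cdots+a_{h-s-t} + c\, m_k 2^{-r} + t z_k, \qquad c := t(h-1) - s,
\]
and check that the equation $t(h-1)=s$ with $s,t \ge 0$ and $s+t \le h$ has only the two solutions $(0,0)$ and $(h-1,1)$. So under the hypothesis $(s,t) \notin \{(0,0),(h-1,1)\}$ the integer $c$ satisfies $1 \le |c| \le h(h-1) < h^2 < 2^{r-1}$ by \eqref{r}.

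Next I would bound $\|n\|_{m_k}$ from below. The bounds assembled in the proof of Lemma~\ref{dis} give $\|a_i\|_{m_k} \le h\, m_k 2^{-2r}$ for every $a_i \in \A_{k-1}$, together with $|z_k| \le m_k 2^{-2r}$ from \eqref{z1}. Applying the triangle inequality for $\|\cdot\|_{m_k}$,
\[
\|a_1+\cdots+a_{h-s-t} + t z_k\|_{m_k} \le (h-s-t)\, h\, m_k 2^{-2r} + t\, m_k 2^{-2r} \le h^2 m_k 2^{-2r}.
\]
On the other hand, because $|c|\, m_k 2^{-r} \le (h^2) m_k 2^{-r} < m_k/2$, the integer $c\, m_k 2^{-r}$ is its own minimum residue modulo $m_k$, so $\|c\, m_k 2^{-r}\|_{m_k} = |c|\, m_k 2^{-r} \ge m_k 2^{-r}$. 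Combining these two estimates via reverse triangle inequality,
\[
\|n\|_{m_k} \ge m_k 2^{-r} - h^2 m_k 2^{-2r} = m_k 2^{-r}\bigl(1 - h^2 2^{-r}\bigr) > m_k 2^{-r-1},
\]
where the last inequality uses $h^2 < 2^{r-1}$ from \eqref{r}.

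Finally, since $|n| \ge \|n\|_{m_k}$ and $m_k = 2^{2rk+\gamma(k)} \ge 2^{2r}$ for $k \ge 1$, we conclude $|n| > 2^{2r} \cdot 2^{-r-1} = 2^{r-1} > n_0$, again by \eqref{r}. I do not foresee any real obstacle: the only mildly delicate point is verifying that the two excluded pairs $(0,0)$ and $(h-1,1)$ are exactly the ones where $c$ vanishes, but this is an elementary check using the constraint $s+t \le h$.
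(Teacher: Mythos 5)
Your proposal is correct and follows essentially the same argument as the paper: both lower-bound $\|n\|_{m_k}$ by the reverse triangle inequality, using that $c=t(h-1)-s$ is a nonzero integer of absolute value less than $2^{r-1}$ (so $\|c\,m_k2^{-r}\|_{m_k}=|c|m_k2^{-r}\ge m_k2^{-r}$) together with the bound $\|a_1+\cdots+a_{h-s-t}+tz_k\|_{m_k}\le h^2m_k2^{-2r}$ from the proof of Lemma~\ref{dis}, and then conclude $|n|\ge m_k2^{-r-1}\ge 2^{r-1}>n_0$. Your explicit verification that $(0,0)$ and $(h-1,1)$ are the only pairs with $c=0$ is a detail the paper leaves implicit, but the route is identical.
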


\begin{proof}
If $n\in \A_k^{(s,t)}$, then
\[
n = a_1+\cdots +a_{h-s-t}+(t(h-1)-s)m_k2^{-r}+tz_k
\]
and
\begin{align*}
|n| &\ge  \| n \|_{m_k}\\
&=\| a_1+\cdots+a_{h-s-t}+tz_k+((h-1)t-s)m_k2^{-r}\|_{m_k}\\
&\ge \|((h-1)t-s)m_k2^{-r}\|_{m_k}-\| a_1+\cdots+a_{h-s-t}+tz_k\|_{m_k}  \\
&\ge |((h-1)t-s)m_k2^{-r}|- h^2 m_k 2^{-2r}\\
&\ge m_k2^{-r}- h^2 m_k2^{-2r}\ge m_k2^{-r-1}\ge 2^{2r}2^{-r-1}\\
&\ge 2^{r-1}>n_0,
\end{align*}
We have used that if $|((h-1)t-s)m_k2^{-r}|<m_k/2$,  then
\[
\|((h-1)t-s)m_k2^{-r}\|_{m_k}=|((h-1)t-s)m_k2^{-r}|\ge m_k2^{-r}.
\]
Also we have used $(h-1)t-s\ne 0$ and (\ref{r}) in
the last inequalities.
\end{proof}

\begin{lemma}\label{For}
For any $k\ge 0$, for any $h'<h$ and for any integer $m$ we have that
\[
r_{\A_k, h'}(m)\le g
\]
\end{lemma}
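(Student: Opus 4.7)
The plan is to prove the lemma by induction on $k$, in parallel with the $m_k$-norm computation carried out in Lemma~\ref{dis}. For the base case $k=0$, the set $\A_0$ is contained in $T_{\gamma}^r(\B)$, which is a $B_h[g]$ set by Proposition~\ref{B[g]}. A standard padding trick (if $\A_0$ is nonempty, fix $a_0\in\A_0$ and extend each $h'$-representation of $m$ to an $h$-representation of $m+(h-h')a_0$ by appending $h-h'$ copies of $a_0$) shows that every $B_h[g]$ set is automatically $B_{h'}[g]$ for every $h'\le h$, which handles the base case.

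For the inductive step, suppose that $r_{\A_{k-1},h''}(m)\le g$ for every $h''<h$ and every integer $m$. We may assume $\A_k=\A_{k-1}\cup\U_k$ (otherwise there is nothing to prove). Classify each $h'$-representation of $m$ by the pair $(s,t)$ recording how many summands equal $u_{2k-1}$ and $u_{2k}$ respectively, and let $N_{s,t}$ be the number of representations with a given $(s,t)$, so that $r_{\A_k,h'}(m)=\sum_{s+t\le h'}N_{s,t}$. When $h'-s-t\ge 1$ we have $N_{s,t}\le r_{\A_{k-1},h'-s-t}(m-su_{2k-1}-tu_{2k})\le g$ by the inductive hypothesis (since $h'-s-t<h$); when $h'-s-t=0$, trivially $N_{s,t}\le 1\le g$. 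The task therefore reduces to showing that at most one pair $(s,t)$ with $s+t\le h'$ yields a nonzero $N_{s,t}$.

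For this key claim I would repeat the norm estimate from the proof of Lemma~\ref{dis}: if $m=a_1+\cdots+a_{h'-s-t}+su_{2k-1}+tu_{2k}$ with $a_i\in\A_{k-1}$, then using $\|a_i\|_{m_k}\le h m_k 2^{-2r}$ (established in Lemma~\ref{dis}) and $|z_k|\le m_k 2^{-2r}$ from \eqref{z1}, one obtains
\[
\bigl\| m-(t(h-1)-s)\,m_k 2^{-r} \bigr\|_{m_k} \;<\; h^2 m_k 2^{-2r}.
\]
Suppose $(s,t)\neq(s',t')$ both contribute, and set $d=(t-t')(h-1)-(s-s')$. The triangle inequality yields $\|d\,m_k 2^{-r}\|_{m_k}\le 2h^2 m_k 2^{-2r}$. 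Since $|d|\le h(h-1)<h^2<2^{r-1}$ by \eqref{r}, the left-hand norm is simply $|d|\,m_k 2^{-r}$, and the inequality forces $|d|<1$, hence $d=0$; equivalently, $t(h-1)-s=t'(h-1)-s'$. If $t=t'$ this gives $(s,t)=(s',t')$, otherwise, assuming $t>t'$, we deduce $s-s'\ge h-1$ and hence $s+t\ge h$, contradicting $s+t\le h'<h$.

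The main obstacle I anticipate is precisely this last exclusion step: the analogous argument in Lemma~\ref{dis} explicitly allowed the exceptional pair $\{(0,0),(h-1,1)\}$, and one must ensure this ambiguity disappears for $h'<h$. The point is that $(h-1)+1=h>h'$, so the exceptional pair falls outside the admissible range; this is exactly why the lemma is stated for $h'<h$ and must fail at $h'=h$ (where $r_{\A,h}(n)=f(n)$ may exceed $g$). Beyond that, the argument is purely the $m_k$-norm bookkeeping used in Lemmas~\ref{dis} and~\ref{n0}, adapted to mixed sums from $\A_0$ and $\U_{<k}$.
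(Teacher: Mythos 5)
Your proof is correct and follows essentially the same route as the paper: induction on $k$, classification of the $h'$-representations by the multiplicities $(s,t)$ of $u_{2k-1}$ and $u_{2k}$, an $m_k$-norm separation showing that only one pair $(s,t)$ can occur, and reduction to the inductive hypothesis at order $h'-s-t<h$. The only real divergence is in how the separation is obtained: the paper pads the $h'$-sum with $h-h'$ copies of a fixed $a\in\A_0$ to lift it to an $h$-sum and then quotes Lemma~\ref{dis} directly (noting, as you do, that $s+t\le h'<h$ excludes the exceptional pair $(h-1,1)$), whereas you rerun the norm computation at order $h'$; both are valid, and your remark that the hypothesis $h'<h$ is exactly what kills the $\{(0,0),(h-1,1)\}$ ambiguity matches the paper's parenthetical justification.
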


\begin{proof}
By induction on $k$.  Proposition~\ref{B[g]} implies that
$T_{\gamma}^r(\B)$ and consequently $\A_0$ are $B_h[g]$-sequences.
In particular, $\A_0$ is a $B_{h'}[g]$ sequence. Then
$r_{\A_0,h'}(m)\le g$ for any integer $m$.

 Suppose that it is true that for any $h'<h$, and for any integer
 $m$ we have that $r_{\A_{k-1},h'}(m)\le g$.

Consider $m\in h'\A_k$.
\begin{itemize}
    \item  Suppose $m\not \in
(h'-s-t)\A_{k-1}+su_{2k-1}+tu_{2k}$ for any $(s,t)\ne (0,0)$. Then
$r_{\A_k,h'}(m)= r_{\A_{k-1},h'}(m)\le g$ by induction hypothesis.
    \item Suppose that  $m\in (h'-s-t)\A_{k-1}+su_{2k-1}+tu_{2k}$ for some
$(s,t)\ne (0,0)$. Consider an element $a\in \A_0$. Then
\[
m+(h-h')a\in \A_k^{(s,t)}=(h-s-t)\A_{k-1}+su_{2k-1}+tu_{2k}.
\]
We apply lemma \ref{dis} and since $(s,t)\ne (h-1,1)$ (because
$h'<h$) we have that
\[
r_{\A_k,h'}(m)\le
r_{\A_k,h}(m+(h-h')a)=r_{\A_{k-1},h-s-t}(m+(h-h')a-su_{2k-1}-tu_{2k}),
\]
and we can apply induction hypothesis because $h-s-t<h$.
\end{itemize}

\end{proof}

\begin{prop}\label{>}
The sequence $\A$ defined above satisfies $r_{\A,h}(n) =
f(n)$ for all integers $n$.
\end{prop}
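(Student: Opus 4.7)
The plan is to prove two matching inequalities, $r_{\A_k,h}(n) \le f(n)$ for every $k$ and $r_{\A,h}(n) \ge f(n)$, and then to combine them using the fact that any $h$-representation of $n$ in $\A = \bigcup_k \A_k$ involves only finitely many summands, so $r_{\A,h}(n) = \lim_{k\to\infty} r_{\A_k,h}(n)$.

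To establish the upper bound I would induct on $k$. The base case follows from Proposition \ref{B[g]}, which tells us that $T_\gamma^r(\B)$ is $B_h[g]$, together with the observation that $\A_0 \subseteq [n_0,\infty)$ forces $r_{\A_0,h}(n) = 0$ for all $n < hn_0$ (covering every $|n| < n_0$). For the inductive step the case $\A_k = \A_{k-1}$ is trivial, so assume $\U_k$ is adjoined, which means $r_{\A_{k-1},h}(z_k) < f(z_k)$. Classifying each $h$-representation of $n$ in $\A_k$ by the numbers $s$ and $t$ of copies of $u_{2k-1}$ and $u_{2k}$ it uses gives
\[
r_{\A_k, h}(n) = \sum_{s + t \le h} r_{\A_{k-1}, h-s-t}\bigl(n - su_{2k-1} - tu_{2k}\bigr),
\]
and a summand is nonzero exactly when $n \in \A_k^{(s,t)}$. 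Lemma \ref{dis} then implies that for a fixed $n$ at most one $(s,t)$ outside the exceptional pair $\{(0,0),(h-1,1)\}$ contributes, and since $\A_k^{(h-1,1)} = \{z_k\}$, the analysis splits according to whether $n = z_k$.

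When $n \ne z_k$, only one term contributes. If that term corresponds to $(s,t) = (0,0)$ the inductive hypothesis yields $r_{\A_k,h}(n) = r_{\A_{k-1},h}(n) \le f(n)$; otherwise the unique contributing pair satisfies $s + t > 0$ and differs from $(h-1,1)$, so Lemma \ref{For} (applied with $h - s - t < h$) bounds the term by $g$ while Lemma \ref{n0} forces $|n| > n_0$, giving $f(n) \ge g$. When $n = z_k$, only $(0,0)$ and $(h-1,1)$ can contribute, and the latter supplies exactly $r_{\A_{k-1},0}(0) = 1$, so $r_{\A_k,h}(z_k) = r_{\A_{k-1},h}(z_k) + 1 \le f(z_k)$ by the adjunction condition.

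For the lower bound I would fix an integer $i$ and use that $z_k = i$ for infinitely many $k$. If $r_{\A,h}(i) < f(i)$, then at every such stage one would have $r_{\A_{k-1},h}(i) \le r_{\A,h}(i) < f(i)$, so $\U_k$ would be adjoined and the new representation $(h-1)u_{2k-1} + u_{2k} = z_k = i$ would raise $r_{\A_k,h}(i)$ by one; iterating would drive $r_{\A,h}(i)$ to $\infty$, contradicting either finiteness of $f(i)$ or the original strict inequality. The main obstacle is the case analysis in the inductive step: one must carefully exploit the exceptional pair $\{(0,0),(h-1,1)\}$ in Lemma \ref{dis}, checking both that at $n = z_k$ the term $(h-1,1)$ supplies precisely one extra representation and that for $n \ne z_k$ any alternative contributing pair lands in the regime where Lemma \ref{n0} provides $|n| > n_0$ and Lemma \ref{For} provides the bound $g \le f(n)$.
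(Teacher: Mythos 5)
Your proposal is correct and follows essentially the same route as the paper: induction on $k$ for the upper bound $r_{\A_k,h}(n)\le f(n)$, using the decomposition of $h\A_k$ by the multiplicities $(s,t)$ of $u_{2k-1},u_{2k}$ together with Lemmas \ref{dis}, \ref{n0} and \ref{For}, and the fact that $z_k=n$ infinitely often (via the built-in representation $(h-1)u_{2k-1}+u_{2k}=z_k$) for the lower bound. Your reorganization of the case analysis around $n=z_k$ versus $n\ne z_k$, justified by the observation $\A_k^{(h-1,1)}=\{z_k\}$, is only a cosmetic variant of the paper's split on whether $n$ lies in a non-exceptional $\A_k^{(s,t)}$.
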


\begin{proof}
Since
\[
\underbrace{u_{2k-1}+\cdots +u_{2k-1}}_{h-1} +u_{2k}=z_k
\]
it follows that if $r_{\A_{k-1},h}(z_k)<f(z_k)$, then
$\A_k=\A_{k-1}\cup \U_k$ and
\[
r_{\A_k,h}(z_k)\ge r_{\A_{k-1},h}(z_k)+1 .
\]
For every integer $n$ there are infinitely many integers $k$ such
that $z_{k}=n$ and so $r_{\A_{k},h}(n)\ge f(n)$ for some $k$.

\smallskip

Next we show that, for every integer $k$, the sequence $\A_k$
satisfies $r_{\A_k,h}(n)\le f(n)$ for all $n$. The proof is by
induction on $k$.

\smallskip

Let $k=0$.   Since $\A_0$ is a $B_h[g]$-sequences, we have
$r_{\A_0,h}(n)\le g\le f(n)$ for $n\ge n_0$. If $n<n_0,$ then
$r_{\A_0,h}(n) = 0\le f(n).$

\smallskip

 Now, suppose that it is true for
$k-1$. In particular $r_{\A_{k-1},h}(z_k)\le f(z_k)$. If
$r_{\A_{k-1},h}(z_k)=f(z_k)$ there is nothing to prove because in
that case $\A_k=\A_{k-1}$. But if $r_{\A_{k-1},h}(z_k)\le
f(z_k)-1,$ then $\A_k=\A_{k-1}\cup \U_k=\A_{k-1}\cup \{ u_{2k-1}\}
\cup \{ u_{2k}\}$. We will assume that until the end of the proof.

 If $n\not \in h\A_k$ then $r_{\A_k,h}(n)=0\le f(n)$.

 If $n\in h\A_k$, since $\A_k=\A_{k-1}\cap \U_k$ we can write
\[
h\A_k = \bigcup_{s,t=0\atop s+t \leq h}^h \left((h-s-t)\A_{k-1} +
su_{2k-1}+tu_{2k}  \right).
\]
Then
\begin{eqnarray}\label{n}
n=a_1+\cdots +a_{h-s-t}+su_{2k-1}+tu_{2k}
\end{eqnarray}
for some $s,t$, satisfying $0\le s,t,\ s+t\le h$ and for some
$a_1,\dots ,a_{h-s-t}\in \A_{k-1}$.

\smallskip

For short we  write $r_{s,t}(n)$ for the number of solutions of
(\ref{n}).

\begin{itemize}
    \item If $n\in (h-s-t)\A_{k-1}+su_{2k-1}+tu_{2k}$ for some $(s,t)\ne
(0,0),\ (s,t)\ne (h-1,1)$ then, due to lemma \ref{dis}, we have
that $r_{\A_k,h}(n)= r_{s,t}(n). $\begin{itemize}
    \item For $0\le n\le n_0$ we have that $r_{s,t}(n)=0\le f(n)$  (due to
lemma \ref{n0}).
    \item For $n>n_0$ we apply lemma \ref{For} in the first inequality below
with $h'=h-s-t$ and $m=n-su_{2k-1}-tu_{2k}$,
\[r_{s,t}(n)=r_{\A_{k-1},h-s-t}(n-su_{2k-1}-tu_{2k})\le g\le
f(n)\]
\end{itemize}
    \item If $n\not \in (h-s-t)\A_{k-1}+su_{2k-1}+tu_{2k}$ for any $(s,t)\ne
(0,0),\ (s,t)\ne (h-1,1)$,
 then
$r_{\A_{k},h}(n)=r_{0,0}(n)+r_{h-1,1}(n).$ Notice that
$r_{0,0}(n)=r_{\A_{k-1},h}(n)$ and that $r_{h-1,1}(n)= 1$  if
$n=z_k$ and $r_{h-1,1}(n)= 0$ otherwise.
\begin{itemize}
    \item
If $n\ne z_k$, then $r_{\A_k,h}(n)=r_{A_{k-1},h}(n)\le f(n)$ by
induction hypothesis.
    \item If $n=z_k$, then $r_{\A_k,h}(n)=r_{\A_{k-1},h}(z_k)+r_{h-1,1}(z_k)\le
(f(z_k)-1)+1=f(n)$.

\end{itemize}
\end{itemize}
\end{proof}

\subsection{The density of $\A$}
Recall that $\gamma :\N_0\rightarrow \N_0$ is a strictly
increasing function with $\gamma (0)=0.$  Let $\R_{\geq 0} = \{x
\in \R : x \geq 0\}.$  We extend $\gamma $ to a strictly
increasing function $\gamma :\R_{\geq 0} \rightarrow \R_{\geq 0}.$
(For example, define $\gamma (x) = \gamma (k+1)(x-k)+\gamma
(k)(k+1-x)$ for $k \leq x \leq k+1.$)

We have
\[
\A(x)\ge \A_0(x)\ge T_{\gamma }^r(\B)(x)-n_0.
\]
Thus, to find a lower bound for $\A(x)$ it suffices to find a
lower bound for the density of $ T_{\gamma }^r(\B).$

\begin{lemma}
$ T_{\gamma }^r(\B)(x)>\B(x2^{-2r\gamma ^{-1}(\log_2x)})$.
\end{lemma}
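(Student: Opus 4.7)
The plan is to exploit that $T_\gamma^r \colon \N_0 \to \N_0$ is strictly increasing, as observed immediately after its definition.  Consequently,
\[
T_\gamma^r(\B)(x) \;=\; \#\{b\in \B : T_\gamma^r(b) \le x\},
\]
so it suffices to produce a real $y$ of size at least $x\cdot 2^{-2r\gamma^{-1}(\log_2 x)}$ such that $b \le y$ forces $T_\gamma^r(b) \le x$; one then has $T_\gamma^r(\B)(x) \ge \B(y)$, which is the desired bound.

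The key step is a bit-length estimate for $T_\gamma^r(b)$.  If $L = \lfloor \log_2 b \rfloor$ is the highest bit position of $b$ and $j$ is the unique integer with $\gamma(j) \le L < \gamma(j+1)$, then formula~\eqref{se} sends this bit to position $L+2rj$ and creates no bit above it (earlier bits are shifted by strictly smaller multiples of $2r$).  Hence the top bit of $T_\gamma^r(b)$ sits at position exactly $L+2rj$, so
\[
T_\gamma^r(b) \;<\; 2^{L + 2rj + 1} \;\le\; 2^{L + 2r\gamma^{-1}(L) + 1},
\]
using the extension of $\gamma^{-1}$ to $\R_{\ge 0}$, under which $j = \lfloor \gamma^{-1}(L) \rfloor \le \gamma^{-1}(L)$.

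Now set $K = \gamma^{-1}(\log_2 x)$ and $y = x\cdot 2^{-2rK}$, so that $\log_2 y = \log_2 x - 2rK$.  For $b \in \B$ with $b \le y$ one has $L \le \log_2 y$, and monotonicity of $\gamma^{-1}$ gives $\gamma^{-1}(L) \le K$, hence
\[
L + 2r\gamma^{-1}(L) \;\le\; \log_2 y + 2rK \;=\; \log_2 x.
\]
Combined with the length estimate, this yields $T_\gamma^r(b) \le x$, and therefore $T_\gamma^r(\B)(x) \ge \B(y)$.  The one piece of bookkeeping I expect to be fiddly (and the only real obstacle) is the off-by-one loss when converting the bit-position inequality $L + 2r\gamma^{-1}(L) \le \log_2 x$ into the integer inequality $T_\gamma^r(b) \le x$, since the natural length bound carries an extra factor of $2$.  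I would absorb this by using that $\gamma^{-1}$ is \emph{strictly} increasing, so $\gamma^{-1}(L) < K$ whenever $L < \log_2 x$; this yields slack of a factor $2^{2r}$ — more than enough to compensate — and simultaneously accounts for the strict inequality $>$ in the conclusion of the lemma.
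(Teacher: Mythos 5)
Your overall strategy --- show that every positive integer $b\le y:=x2^{-2r\gamma^{-1}(\log_2 x)}$ satisfies $T_{\gamma}^r(b)\le x$, by comparing the block index $j$ of the top bit of $b$ with $K=\gamma^{-1}(\log_2 x)$ --- is exactly the paper's, but the execution has a genuine gap at the step you yourself flag. The bit-length bound $T_{\gamma}^r(b)<2^{L+2rj+1}\le 2\cdot 2^{2rj}b$ really does cost a factor of $2$, and the proposed repair does not recover it: from the strict inequality $\gamma^{-1}(L)<K$ between \emph{real} numbers you get no quantitative slack, let alone a factor $2^{2r}$; when $\gamma$ grows quickly, $K-\gamma^{-1}(L)$ (and even $K-j$) can be an arbitrarily small positive number, so $2^{2r(K-j)}$ can be as close to $1$ as you like. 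Concretely, take $r=1$, $\gamma=(0,1,100,101,\dots)$ with the piecewise linear extension, and $x=2^{10.9}$, so $K=1.1$ and $y=2^{8.7}\approx 415.9$. For $b=415\le y$ one has $L=8$, $j=1$, and your bound gives only $T_{\gamma}^r(b)<2^{11}=2048>x\approx 1911$, while the available slack is $2^{2r(K-j)}=2^{0.2}\approx 1.15$, nowhere near the needed factor $2$. (The lemma itself is safe here: $T_{\gamma}^r(415)=1+4\cdot 414=1657\le x$.)

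The fix is to drop the bit-length estimate and bound the transform summand by summand, which is what the paper does: writing $T_{\gamma}^r(b)=\sum_{k=0}^{j}2^{2rk}B_k$ with $B_k=\sum_{i=\gamma(k)}^{\gamma(k+1)-1}\e_i2^i\ge 0$ and $\sum_k B_k=b$, one gets $T_{\gamma}^r(b)\le 2^{2rj}b$ with no spurious factor of $2$; then $j\le\gamma^{-1}(\log_2 b)\le K$ and $b\le x2^{-2rK}$ give $T_{\gamma}^r(b)\le x$ directly. A separate small point: both this argument and the paper's actually yield $T_{\gamma}^r(\B)(x)\ge \B(y)$; the strict inequality in the statement is not produced by either, and your appeal to strict monotonicity of $\gamma^{-1}$ does not supply it.
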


\begin{proof}
Let $b$ be a positive integer such that
\[
b\le x2^{-2r\gamma^{-1}(\log_2x)}.
\]
Let $\ell $ be such that $2^{\gamma ({\ell})}\le b<2^{\gamma
({\ell}+1)}$. Then we can write
\begin{equation}b=\sum_{k=0}^{\ell}\sum_{i=\gamma
(k)}^{\gamma(k+1)-1}\e_i2^i.\end{equation} It follows from the
definition~\eqref{se} of the Zeros Inserting Transformation that
\begin{align*}
T_{\gamma }^r(b)
& = \sum_{k=0}^{\ell }2^{2rk}\sum_{i=\gamma (k)}^{\gamma (k+1)-1}\e_i 2^i  \\
& \leq 2^{2r{\ell}} b \\
& \leq 2^{2r\gamma^{-1}(\log_2 b)}b \\
& \leq 2^{2r (\gamma^{-1}(\log_2 b) - \gamma^{-1}(\log_2x) )}x \\
& \leq x.
\end{align*}
\end{proof}

Recall that  $\epsilon$ is a decreasing positive function defined
on $[1,\infty)$ such that $\lim_{x\rightarrow \infty} \epsilon(x)
= 0.$ We complete the proof of Theorem 1 by choosing a function
$\gamma$ that satisfies the inequality
\[
2^{-2r\gamma^{-1}(\log_2 x)} \geq \epsilon(x).
\]
It suffices to take $\gamma (x)>\log_2(\epsilon^{-1}(2^{-2rx}))$.

\end{document}